\DeclareFontShape{OT1}{lmtt}{m}{it}
{<->sub*lmtt/m/sl}{}
\definecolor{dr}{rgb}{0.75,0.00,0.00}
\definecolor{lr}{rgb}{1.00,0.75,0.75}
\newtheorem{proposition}{Proposition}
\title{Optimization Models for Autonomous Transfer Hub Networks}
\author{
Chungjae Lee\and
Kevin Dalmeijer\and
Pascal Van Hentenryck
\affiliations
H. Milton Stewart School of Industrial and Systems Engineering, Georgia Institute of Technology
\emails
clee384@gatech.edu,
dalmeijer@gatech.edu,
pascal.vanhentenryck@isye.gatech.edu
}
\begin{document}

\maketitle

\begin{abstract}
Autonomous trucks are expected to fundamentally transform the freight transportation industry. In particular, Autonomous Transfer Hub Networks (ATHN), which combine autonomous trucks on middle miles with human-driven on the first and last miles, are seen as the most likely deployment pathway of this technology. 
This paper presents three methods to optimize ATHN operations and compares them: a constraint-programming model, a column-generation approach, and a bespoke network flow method. Results on a real case study indicate that the network flow model is highly scalable and outperforms the other two approaches by significant margins. 
\end{abstract}

\section{Introduction}

Autonomous trucks are expected to fundamentally transform the freight transportation industry. Morgan Stanley estimates the potential savings from automation at \$168 billion annually for the US alone \cite{Greene2013-AutonomousFreightVehicles}.
Additionally, autonomous transportation may improve on-road safety, and reduce emissions and traffic congestion \cite{ShortMurray2016-IdentifyingAutonomousVehicle,SlowikSharpe2018-AutomationLongHaul}.

SAE International~\shortcite{SAEInternational2018-TaxonomyDefinitionsTerms} defines different levels of driving automation, ranging from L0 to L5, corresponding to no-driving automation to full-driving automation.
The current focus is on L4 technology (high automation), which aims at delivering automated trucks that can drive without any need for human intervention in specific domains, e.g., on highways.
The trucking industry is actively involved in making L4 vehicles a reality.
Daimler Trucks, one of the leading heavy-duty truck manufacturers in North America, is working with both Torc Robotics and Waymo to develop autonomous trucks \cite{HDT2021-DaimlersRedundantChassis}.
In 2020, truck and engine maker Navistar announced a strategic partnership with technology company TuSimple to go into production by 2024 \cite{TransportTopics2020-NavistarTusimplePartner}.
Truck manufacturers Volvo and Paccar have both announced partnerships with Aurora \cite{TechCrunch2021-AuroraVolvoPartner}.
Other companies developing self-driving vehicles include Embark, Gatik, Kodiak, and Plus \cite{FleetOwner2021-TusimpleAutonomousTruck,Forbes2021-PlusPartnersIveco,FreightWaves2021-GatikIsuzuPartner}.

A study by Viscelli \shortcite{Viscelli-Driverless?AutonomousTrucks} describes different scenarios for the adoption of autonomous trucks by the industry.
The most likely scenario, according to some of the major players, is the \emph{transfer hub business model} ~\cite{Viscelli-Driverless?AutonomousTrucks,RolandBerger2018-ShiftingGearAutomation,ShahandashtEtAl2019-AutonomousVehiclesFreight}.
An Autonomous Transfer Hub Network (ATHN) makes use of autonomous truck ports, or \emph{transfer hubs}, to hand off trailers between human-driven trucks and driverless autonomous trucks.
Autonomous trucks then carry out the transportation between the hubs, while conventional trucks serve the first and last miles (see Figure~\ref{fig:autonomous_example}). 
Orders are split into a first-mile leg, an autonomous leg, and a last-mile leg, each of which served by a different vehicle.
A human-driven truck picks up the cargo at the customer location, and drops it off at the nearest transfer hub.
A driverless autonomous truck moves the trailer to the transfer hub closest to the destination, and another human-driven truck performs the last leg.

\begin{figure}[!t]
	\centering
	\includegraphics[width=\linewidth]{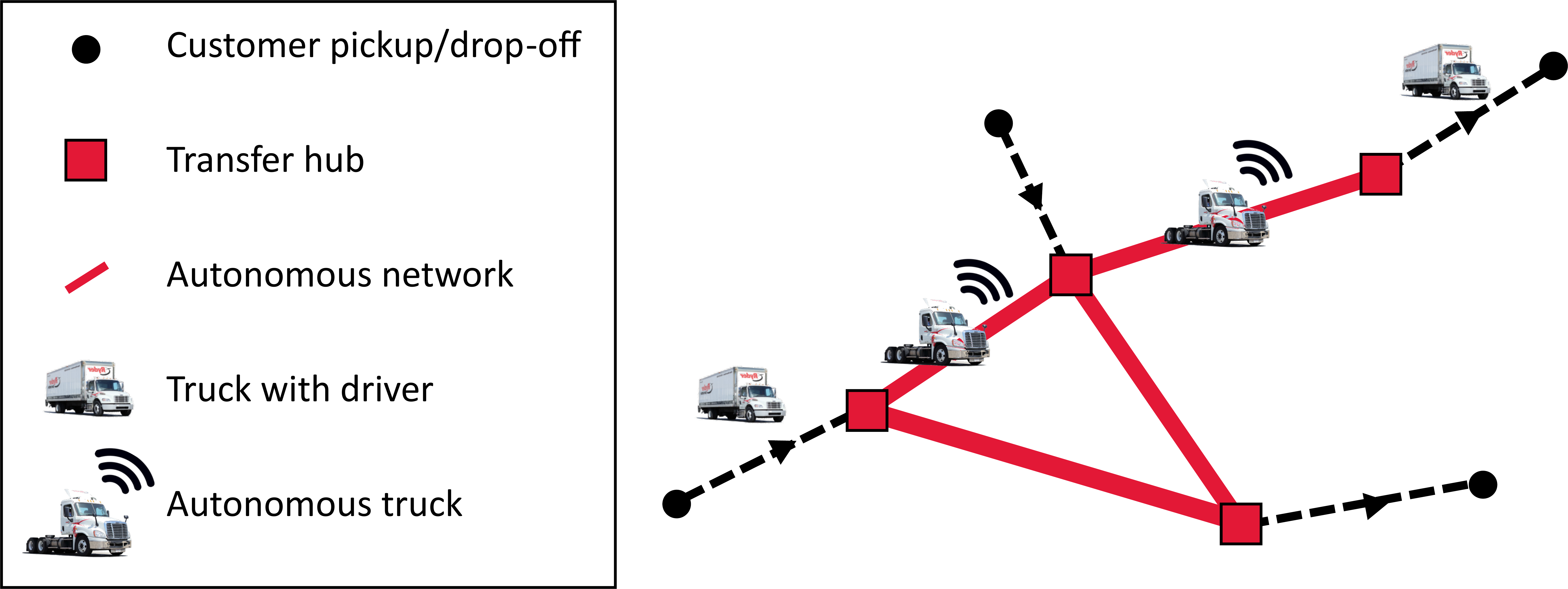}
	\caption{Example of an Autonomous Transfer Hub Network.}
	\label{fig:autonomous_example}
\end{figure}

ATHNs apply automation where it counts: Monotonous highway driving is automated, while more complex local driving and customer contact is left to humans.
Global consultancy firm Roland Berger \shortcite{RolandBerger2018-ShiftingGearAutomation} estimates operational cost savings between 22\% and 40\% in the transfer hub model, based on cost estimates for three example trips.
A recent white paper published by Ryder System, Inc. \shortcite{RyderSAM2021-ImpactAutonomousTrucking} studies whether these savings can be attained for actual operations and realistic orders.
It models ATHN operations as a scheduling problem and uses a Constraint Programming (CP) model \cite{DalmeijerVanHentenryck2021-OptimizingFreightOperations} to minimize empty miles and produce savings from 27\% to 40\% on a case study in the Southeast of the US.

This paper reconsiders the optimization of ATHNs through both a solution quality and a solver performance lens. In addition to the CP model, it considers a model based on the Vehicle Routing Problem with Full Truckloads (VRPFL) \cite{ArunapuramEtAl2003-VehicleRoutingScheduling}. This model is tackled by two different approaches: a Column Generation (CG)
approach and a Network Flow (NF) approach. The main technical contributions can be summarized as follows: (1) the NF model provides lower bounds as strong as those of CG on the Ryder case study; (2) solutions to the NF model can be transformed into upper bounds that are within 1\% of optimality; and (3) the resulting NF-based approach is highly scalable and provides orders of magnitudes of improvement over CP and CG. 
From a case study standpoint, the main contribution of the paper  is to provide the first lower bounds for the ATHN optimization and to demonstrate that real instances can be solved to near-optimality. 

The remainder of this paper is organized as follows.
Section~\ref{sec:problem} formally defines the problem of optimizing ATHN operations, and Section~\ref{sec:models} introduces the three solution methods.
Section~\ref{sec:casestudy} compares the methods on the case study, and Section~\ref{sec:conclusion} provides the conclusions.

\section{Problem Statement}
\label{sec:problem}
The ATHN problem aims to find a plan for a set of vehicles to fulfill a list of customer tasks at minimum cost.
Let $L$ be a set of locations, including hub locations $L_H \subset L$, with driving time $\tau_{ij} \ge 0$ and driving distance $c_{ij} \geq 0$ between locations $i, j \in L$ ($\tau_{ij}, c_{ij} > 0$ if $i \neq j$). Customers request freight to be picked up at a given location at a given time, to be transported to a dropoff location.
This information is used to generate a set of tasks $T$, where each order generates a first-mile task (regular truck from pickup to hub), an autonomous task (transportation between the hubs), and a last-mile task (regular truck from hub to dropoff).
Every task $t\in T$ corresponds to a single leg, and is defined by an origin $o(t) \in L$, a destination $d(t) \in L$, and a pickup time $p(t)$.
The duration of a task equals $\tau_{o(t)d(t)} + 2S$, where $S \ge 0$ is the fixed time for loading or unloading a trailer.
The pickup time of the first-mile task is equal to the customer requested pickup time, and pickup times for subsequent tasks are set to the time that the freight is planned to arrive at $o(t)$.
The set of available trucks is partitioned into autonomous trucks $K$, and regular trucks $K_h$ at every hub $h \in L_H$.

A feasible plan is created by assigning all tasks to the vehicles.
It is assumed that an appointment flexibility of $\Delta \geq 0$ minutes is permitted, which means that task $t\in T$ may start anywhere in the interval $[p(t)-\Delta, p(t)+\Delta]$.
Trucks are assumed to provide dedicated service, i.e., they transport one order at a time, and may be scheduled with any start- and end-point over the planning horizon.
Every task must be assigned to the right kind of truck, and tasks performed by the same vehicle must not overlap in time.
If the dropoff location $i$ of the previous task does not match the pickup location $j$ of the next task, then an empty relocation is necessary with time $\tau_{ij}$ and cost $c_{ij}$.
An optimal plan performs all the tasks at minimum total driving distance, or equivalently, at minimum total relocation distance.

The problem described above can be decomposed and solved independently for the autonomous network and for the operations at each of the hubs.
The main challenge is in optimizing the autonomous network since, in practice, the first- and last-mile problems are not very constrained. That is the focus of the paper.

\section{Models and Methodology}
\label{sec:models}
The problem of optimizing ATHN operations is modeled as a scheduling problem and as a VRPFL.
Three different solution methods are proposed: The scheduling problem is solved with CP, and the VRPFL is addressed with a CG-based heuristic and with an NF model.
For simplicity the methods are presented for the autonomous part of the network only.
This is without loss of generality, because the ATHN problem is decomposable.

\subsection{Scheduling Modeling}
\label{sec:CP}

The ATHN optimization can be modeled as a scheduling problem and solved using  CP as in the Ryder \shortcite{RyderSAM2021-ImpactAutonomousTrucking} case study.  
Figure~\ref{fig:formulation} reproduces the CP model for this problem using OPL syntax \cite{VanHentenryck1999-OplOptimizationProgramming}.

\newsavebox{\modelbox}
\begin{lrbox}{\modelbox}
\begin{varwidth}{0.46\textwidth}
\vspace{-0.25cm}
\begin{lstlisting}
range Trucks = ...;|\label{cp:range_start}|
range Tasks = ...;
range Sites = ...;
range Horizon = ...;
range Types = Sites union { shipType };|\label{cp:range_end}|
int or[Tasks] = ...;|\label{cp:param_start}|
int de[Tasks] = ...;
int pickupTime[Tasks] = ...;|\label{cp:pickup_time}|
int loadTime = ...;|\label{cp:load_time}|
int flexibility = ...;|\label{cp:flex}|
int travelTime[Types,Types] = ...;
int travelCost[Types,Types] = ...;|\label{cp:param_end}|

dvar interval task[t in Tasks] in Horizon
  size travelTime[or[t],de[t]] + 2*loadTime;
dvar interval ttask[k in Trucks,t in Tasks] optional in Horizon
  size travelTime[or[t],de[t]] + 2*loadTime;
dvar interval load[Trucks,Tasks] optional in Horizon size loadTime;
dvar interval ship[k in Trucks,t in Tasks] optional in Horizon
  size travelTime[ort],de[t]];
dvar interval unload[Trucks,Tasks] optional in Horizon size loadTime;
dvar sequence truckSeq[k in Trucks]
  in append(all(t in Tasks)load[k,t],all(t in Tasks)ship[k,t],all(t in Tasks)unload[k,t])
  types append(all(t in Tasks)or[t],all(t in Tasks)shipType,all(t in Tasks)de[t]);
dvar int emptyMilesCost[Trucks,Tasks];
dvar int truckEmptyMilesCost[Trucks];

minimize sum(k in Trucks) truckEmptyMilesCost[k];|\label{cp:obj}|

constraints {
  forall(t in Tasks)|\label{cp:flex_constr_start}|
    startOf(task[t]) >= pickupTime[t] - flexibility;
    startOf(task[t]) <= pickupTime[t] + flexibility;|\label{cp:flex_constr_end}|
  forall(k in Trucks,t in Tasks)
    span(ttask[k,t],[load[k,t],ship[k,t],unload[k,t]]);|\label{cp:span_constr}|
    startOf(ship[k,t]) == endOf(load[k,t])|\label{cp:ship_constr}|
    startOf(unload[k,t]) == endOf(ship[k,t])|\label{cp:unload_constr}|
  forall(k in Trucks)
    alternative(task[t],all(k in Trucks) ttask[k,t])|\label{cp:alt_constr}|
  forall(k in Trucks,t in Tasks)
    emptyMilesCost[k,t] = travelCost[destination[t],typeOfNext(truckSeq[k],ttask[k,t],destination[t],destination[t])];|\label{cp:empty_miles_single}|
  forall(k in Trucks)
    truckEmptyMilesCost[k] = sum(t in Tasks) emptyMilesCost[k,t];|\label{cp:empty_miles_total}|
  forall(k in Trucks)
    noOverlap(truckSeq,travelTime);|\label{cp:no_overlap_constr}|
}|\vspace{-0.25cm}|
\end{lstlisting}
\end{varwidth}
\end{lrbox}

\begin{figure}[!t]
\makebox[\textwidth][l]{%
\fbox{\begin{minipage}{0.47\textwidth}
	\usebox{\modelbox}
\end{minipage}}
}
\caption{CP Model for the ATHN Problem.}
\label{fig:formulation}
\end{figure}

The main decision variables are the interval variables {\tt task[t]}
that specify the start and end times of task {\tt t} when processed
by the autonomous network, and the optional interval variables {\tt
	ttask[k,t]} that are present if task {\tt t} is transported by
truck {\tt k}. These optional variables consist of three subtasks that
are captured by the interval variables {\tt load[k,t]} for loading,
{\tt ship[k,t]} for transportation, and {\tt unload[k,t]} for
unloading. The other key decision variables are the sequence variables
{\tt truckSeq[k]} for every truck: these variables
represent the sequence of tasks performed by every truck. They
contain the loading, shipping, and unloading interval variables
associated with the trucks, and their types. The type of a loading
interval variable is the origin of the task, the type of an unloading
interval variable is the destination of the task, and the type of the
shipping interval variable is the specific type {\tt shipType} that is
used to represent the fact that there is no transition cost and
transition time between the load and shipping subtasks, and the
shipping and destination subtasks. The model also contains two
auxiliary decision variables to capture the empty mile cost between a
task and its successor, and the empty mile cost of the truck
sequence.

The objective function (line \ref{cp:obj}) minimizes the total empty mile cost.
The constraints in lines \ref{cp:flex_constr_start}--\ref{cp:flex_constr_end} specify the potential start times of the tasks, and are defined in terms of the pickup times and the flexibility parameter.
The {\sc span} constraints (line \ref{cp:span_constr}) link the task variables and their subtasks, while the constraints in lines \ref{cp:ship_constr}--\ref{cp:unload_constr} link the subtasks together.
The {\sc alternative} constraints on line \ref{cp:alt_constr} specify that each task is processed by a single truck.
The empty mile costs between a task and its subsequent task (if it exists) is computed by the constraints in line \ref{cp:empty_miles_single} that use the {\sc 	typeOfNext} expression on the sequence variables.
The total empty mile cost for a truck is computed in line \ref{cp:empty_miles_total}.
The {\sc noOverlap} constraints in line 51 impose the disjunctive constraints between the tasks and the transition times.
The CP model is solved with the CPLEX CP Optimizer version 12.8.

\subsection{Vehicle Routing Modeling}
\label{sec:VRPFLcg}
The ATHN optimization can be modeled as a variant of the Vehicle Routing Problem with Full Truckloads \cite{ArunapuramEtAl2003-VehicleRoutingScheduling}.
The VRPFL is formulated on the directed \emph{task graph} $G=(V,A)$.
The set $V = T \cup \{src, snk\}$ contains a vertex for every task, together with a source and a sink node.
Operations of a single truck are modeled by a route from $src$ to $snk$, where arcs model the transition from one task to the next.
Arcs are defined between the tasks, going out of the source, and going into the sink.
If arc $a \in A$ connects task $t$ to task $t'$, it is associated with time $\tau_{tt'} = \tau_{o(t)d(t)} + 2S + \tau_{d(t)o(t')}$ and cost $c_{tt'} = c_{o(t)d(t)} + c_{d(t')o(t')}$, i.e., performing task $t$ and relocating to the start location of task $t'$.
Arcs connecting to $src$ take no time and have no cost, and arcs into $snk$ do not require relocation.
Every vertex $t \in V\backslash\{src,snk\}$ has a time window $[p(t)-\Delta, p(t)+\Delta]$.
The ATHN problem amounts to finding a minimum-cost set of at most $\lvert K \rvert$ feasible routes on $G$ that cover all vertices.
A route is feasible if it starts at the source, ends at the sink, and satisfies the time constraints.

\paragraph{Preprocessing} A computational challenge in solving Vehicle Routing Problems (VRPs) is dealing with cycles in the underlying graph: either cycles are not addressed, which results in a weak lower bound, or cycles are avoided, which takes computational effort \cite{CostaEtAl2019-ExactBranchPrice}. For long-distance trucking, it turns out that almost all cycles can be removed in preprocessing by only keeping arcs between task $t$ and $t'$ if it is possible to perform the tasks in that order, i.e., $p(t)-\Delta + \tau_{tt'} \le p(t')+\Delta$.
The arcs imply an ordering on $p(t)$ if all tasks take sufficiently long.
In particular, for $\Delta \le S$ it is guaranteed that $\tau_{tt'} > 2S \ge 2\Delta$ such that every arc follows the ordering $p(t) < p(t')$, and the graph is acyclic.

\paragraph{Column Generation}
VRP variants are often formulated with a set-partitioning formulation and solved with column generation (e.g., see \cite{CostaEtAl2019-ExactBranchPrice}).
Let $R$ be the set of all feasible routes, and let binary variable $x_r$ be one if and only if route $r \in R$ is selected.
The cost of a route $c_r$ is the sum over the arc costs.
Figure~\ref{fig:cgmodel} states the set-partitioning formulation.
Objective~\eqref{eq:cg:obj} minimizes the total distance and Constraints~\eqref{eq:cg:cover} ensure that all tasks are performed.
Equations~\eqref{eq:cg:x} define the variables.

\begin{figure}[!t]
	\centering
	\begin{align}
		\min \quad &\sum_{r \in R} c_r x_r\label{eq:cg:obj}\\
		\text{s.t.} \quad
		&\sum_{r \in R \vert t \in r} x_{r} = 1 \qquad \forall t \in T \label{eq:cg:cover}\\
		&\sum_{r \in R} x_{r} \leq \lvert K \rvert\label{eq:cg:vehicles}\\
		&x_{r} \in \{0,1\} \qquad \forall r \in R \label{eq:cg:x}
	\end{align}
	\caption{Set-Partitioning Form. for the ATHN Problem.}
	\label{fig:cgmodel}
\end{figure}

A CG-based restricted master heuristic \cite{JoncourEtAl2010-ColumnGenerationBased} is used to solve the problem.
First, CG is used to solve the Linear Programming (LP) relaxation and to obtain a lower bound on the objective value.
CG is a technique to solve LPs by only generating the variables (columns) as they are needed, which makes it suited to deal with the large number of $x$-variables \cite{DesaulniersEtAl2005-ColumnGeneration}.
The problem is split into a master problem and a subproblem that are solved iteratively until convergence.
The master problem is an LP that is solved with Gurobi 9.1.2, and the subproblem is typically solved with a labeling algorithm.
This paper uses the labeling algorithm provided by the cspy Python package \cite{Sanchez2020-CspyPythonPackage}.
Motivated by the (almost) acyclic nature of the task graphs, this paper does not address cycles.

The CG procedure results in a valid lower bound and a set $\bar{R}$ of routes that are relevant to the LP relaxation.
An upper bound is generated by solving the set-partitioning formulation with integer variables for route set $\bar{R}$.
To make it easier to construct solutions, tasks are allowed to be performed multiple times, and duplicates are removed afterwards.
The result is a feasible solution to the ATHN problem.
The CG method is a heuristic, but may be extended to an exact method by embedding CG in a branch-and-bound framework \cite{BarnhartEtAl1998-BranchPriceColumn}.

\paragraph{Network Flow}
\label{sec:mcnf}
The VRPFL can also be modeled with the vehicle-flow formulation that is presented by Figure~\ref{fig:nfmodel} (similar to VRP1 in Toth and Vigo~\shortcite{TothVigo2014-VehicleRoutingProblems}).
Rather than using route variables, the vehicle-flow formulation uses binary flow variables $y_a$ that indicate whether arc $a \in A$ is part of any route.
For brevity, let $\delta^+(t)$ and $\delta^-(t)$ denote the set of arcs going out and coming into $t \in V$, respectively.
Objective~\eqref{eq:nf:obj} minimizes the total distance.
Constraints~\eqref{eq:nf:flow-cover} and~\eqref{eq:nf:flow+cover} require that each task has an inflow and outflow of one, which ensures the task is performed, and Constraint~\eqref{eq:nf:maxvehicles} limits the maximum number of vehicles.
The subtour elimination constraints prevent cyclic flows and the time constraints ensure that the time windows are satisfied.
Together the constraints ensure a disjoint set of feasible routes, just like the set-partitioning model in Figure~\ref{fig:cgmodel}.
Toth and Vigo~\shortcite{TothVigo2014-VehicleRoutingProblems} present different implementations for Constraints~\eqref{eq:nf:subtour}-\eqref{eq:nf:time} but the details are not important to this paper.

\begin{figure}[!t]
	\centering
	\begin{align}
		\min \quad &\sum_{a \in A} {c}_{a} y_{a} \label{eq:nf:obj}\\
		\text{s.t.} \quad
		&\sum_{a \in \delta^{-} (t)}y_{a} = 1 \quad \forall t \in V\backslash\{src,snk\} \label{eq:nf:flow-cover}\\
		&\sum_{a \in \delta^{+} (t)}y_{a} = 1 \quad \forall t \in V\backslash\{src,snk\} \label{eq:nf:flow+cover}\\
		&\sum_{a \in \delta^{+} (src)}y_{a} \le \lvert K \rvert \label{eq:nf:maxvehicles}\\
		& \textrm{\emph{(subtour elimination constraints)}} \label{eq:nf:subtour}\\
		& \textrm{\emph{(time constraints)}} \label{eq:nf:time}\\
		&y_a \in \{0,1\} \quad \forall a \in A \label{eq:nf:vars}
	\end{align}
	\caption{Vehicle-Flow Form. for the ATHN Problem.}
	\label{fig:nfmodel}
\end{figure}

Given the near-acyclicity of task graphs in ATHNs,  Constraints~\eqref{eq:nf:subtour}-\eqref{eq:nf:time} are relaxed.
The remaining Problem~\eqref{eq:nf:obj}-\eqref{eq:nf:maxvehicles}, \eqref{eq:nf:vars} will be referred to as the \emph{NF model}.
Relaxing these constraints is motivated by the structure of the task graph: Only few cycles are expected {\em after preprocessing the arcs}, which makes the subtour elimination constraints almost redundant. Furthermore, the remaining arcs only connect tasks that can be performed sequentially in time, making the time constraints less important. Another important observation is that the NF model has the integrality property and can therefore be solved in polynomial time with LP \cite{AhujaEtAl1993-NetworkFlowsTheory}.

Solving the NF model \eqref{eq:nf:obj}-\eqref{eq:nf:maxvehicles}, \eqref{eq:nf:vars} with linear programming immediately provides a lower bound. This follows from the fact that the NF model is a relaxation of the vehicle-flow formulation~\eqref{eq:nf:obj}-\eqref{eq:nf:vars}.
It is important to remark that {\em the lower bound still depends on the time flexibility parameter $\Delta$}, even if the time constraints are relaxed. This stems from the arc preprocessing step, which removes more arcs when the flexibility shrinks.
The NF model for a flexibility $\delta$ is denoted by NF$_\delta$.

Upper bounds are generated according to the following procedure.
Solve NF$_\delta$ for a selection of different time flexibilities $\delta \le \Delta$, including $\delta=0$.
For each $\delta$ take the vehicle routes that are produced by NF$_\delta$ and try to apply them to NF$_\Delta$.
First, check if the subtour elimination constraints are satisfied.
Next, if this is the case, try to satisfy the time constraints of NF$_\Delta$ by following each of the routes and setting the earliest possible starting time for every vertex.
If this is also successful, the resulting solution is a feasible solution to the ATHN problem with flexibility $\Delta$.

The above procedure is opportunistic, but surprisingly it is \emph{guaranteed} to produce a feasible solution for $\delta=0$ if one exists, and this solution is trivially feasible for larger flexibilities.
\begin{proposition}
	For time flexibility $\Delta =  0$ the NF model produces an optimal solution.
\end{proposition}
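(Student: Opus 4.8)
The plan is to show that when $\Delta = 0$ the two relaxed families of constraints — subtour elimination~\eqref{eq:nf:subtour} and the time constraints~\eqref{eq:nf:time} — are redundant, so that the NF model has exactly the same feasible set as the full vehicle-flow formulation~\eqref{eq:nf:obj}-\eqref{eq:nf:vars}. Since the excerpt already establishes that the NF model is a relaxation (so its optimum lower-bounds the ATHN optimum) and enjoys the integrality property, proving this redundancy immediately gives that an optimal NF solution is an optimal ATHN plan.

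First I would invoke the preprocessing result from Section~\ref{sec:VRPFLcg}: because $\Delta = 0 \le S$, every retained arc $(t,t')$ obeys $p(t) < p(t')$, so the task graph $G$ is acyclic. Acyclicity alone rules out any cyclic flow in the support of an integral solution, so the subtour elimination constraints~\eqref{eq:nf:subtour} are automatically satisfied and can be dropped without altering the feasible set.

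Next I would show that arc preprocessing exactly encodes time feasibility at $\Delta = 0$. With zero flexibility the window $[p(t)-\Delta, p(t)+\Delta]$ collapses to the single point $p(t)$, forcing every task to start at $p(t)$. An arc $(t,t')$ is then time-feasible precisely when $p(t) + \tau_{tt'} \le p(t')$, which is exactly the preprocessing condition specialized to $\Delta = 0$. Thus every arc of $G$ is time-feasible, and fixing each task's start time to $p(t)$ satisfies the time constraints~\eqref{eq:nf:time} automatically.

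The technical crux is the flow decomposition that turns an NF solution into routes. Constraints~\eqref{eq:nf:flow-cover}-\eqref{eq:nf:flow+cover} give in-degree and out-degree one at every task node, and since $G$ is acyclic the support of any integral solution decomposes into $src$-$snk$ paths with each task on exactly one path; the number of paths equals the outflow of $src$, which is at most $\lvert K \rvert$ by~\eqref{eq:nf:maxvehicles}. By the two preceding steps each path is a feasible route, so the paths jointly form a feasible ATHN plan of cost equal to the NF objective; since this cost also lower-bounds the ATHN optimum, the plan is optimal. I expect the main obstacle to be making the decomposition rigorous — in particular, arguing that each maximal path in the support terminates at $snk$ (the unique sink of the acyclic support) and that the integrality property indeed lets the LP return such a $\{0,1\}$ solution.
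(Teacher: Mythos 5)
Your proposal is correct and takes essentially the same route as the paper's own proof: preprocessing at $\Delta=0$ forces $p(t)+\tau_{tt'}\le p(t')$ on every retained arc, which yields acyclicity (making the subtour elimination constraints redundant) and shows that starting every task exactly at $p(t)$ satisfies the time constraints, so the NF optimum --- a relaxation lower bound --- is attained by a feasible plan and is therefore optimal. The only difference is one of detail: you make explicit the decomposition of the integral flow into $src$--$snk$ paths via the unit in/out-degree constraints and acyclicity, a step the paper leaves implicit in the phrase ``following any route.''
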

\begin{proof}
	Arc preprocessing for $\Delta=0$ ensures that all arcs $(t,t')$ satisfy $p(t) + \tau_{tt'} \le p(t')$.
	This immediately implies that the graph is acyclic and thus that the subtour elimination constraints are automatically satisfied.
	Following any route, the condition above ensures that starting task $t$ at time $p(t)$ satisfies the time constraints.
	As the minimum-cost solution to NF$_0$ also satisfies the relaxed constraints, the solution is optimal.
\end{proof}

\noindent
The fact that the upper bound procedure is guaranteed to work is specifically due to autonomous vehicles.
The main technical difference is that autonomous trucks are completely interchangable, while human-driven trucks need to be distinguished to ensure that drivers return to their specific starting point \cite{ArunapuramEtAl2003-VehicleRoutingScheduling} or that they do not exceed the maximum driving time \cite{GronaltEtAl2003-NewSavingsBased}.
Gronalt \emph{et al.}~\shortcite{GronaltEtAl2003-NewSavingsBased} consider a network flow relaxation with aggregated drivers which results in a lower bound, but the outcomes cannot be transformed into upper bounds.
Human factors do not apply to autonomous trucks, which enables the simple upper bound strategy in this paper.

\section{Case Study}
\label{sec:casestudy}
The three models are applied to the realistic order data introduced in the Ryder~\shortcite{RyderSAM2021-ImpactAutonomousTrucking} white paper.
Ryder prepared this representative dataset for its dedicated transportation business in the Southeast of the US, reducing the scope to orders that were strong candidates for automation.

\paragraph{Data Description}
\label{sec:data}
The dataset consists of long-haul trips (431 miles average) that start in the first week of October 2019, and stay completely within the following states: AL, FL, GA, MS, NC, SC, and TN.
The case study focuses on scheduling the 494 most \emph{challenging orders} that currently consist of a single delivery followed by an empty return trip.
These orders make up 24\% of the dataset, and account for 53\% of the empty mileage.
Two sets of hubs are considered: a small network with 17 transfer hubs in the areas where Ryder trucks most frequently access the highway system, and a large network that includes 13 additional hubs in locations with fewer highway access points.
Figure~\ref{fig:designs} visualizes the networks.
The exact hub locations are masked, but the figures are accurate within a 50 mile range.
Orders that are better served with a conventional truck are filtered out, and the remaining orders are split into separate tasks.
See the Ryder \shortcite{RyderSAM2021-ImpactAutonomousTrucking} white paper for additional details.

\begin{figure}[!t]
	\centering
	\includegraphics[width=0.20\textwidth, trim=28cm 3cm 29cm 15cm, clip]{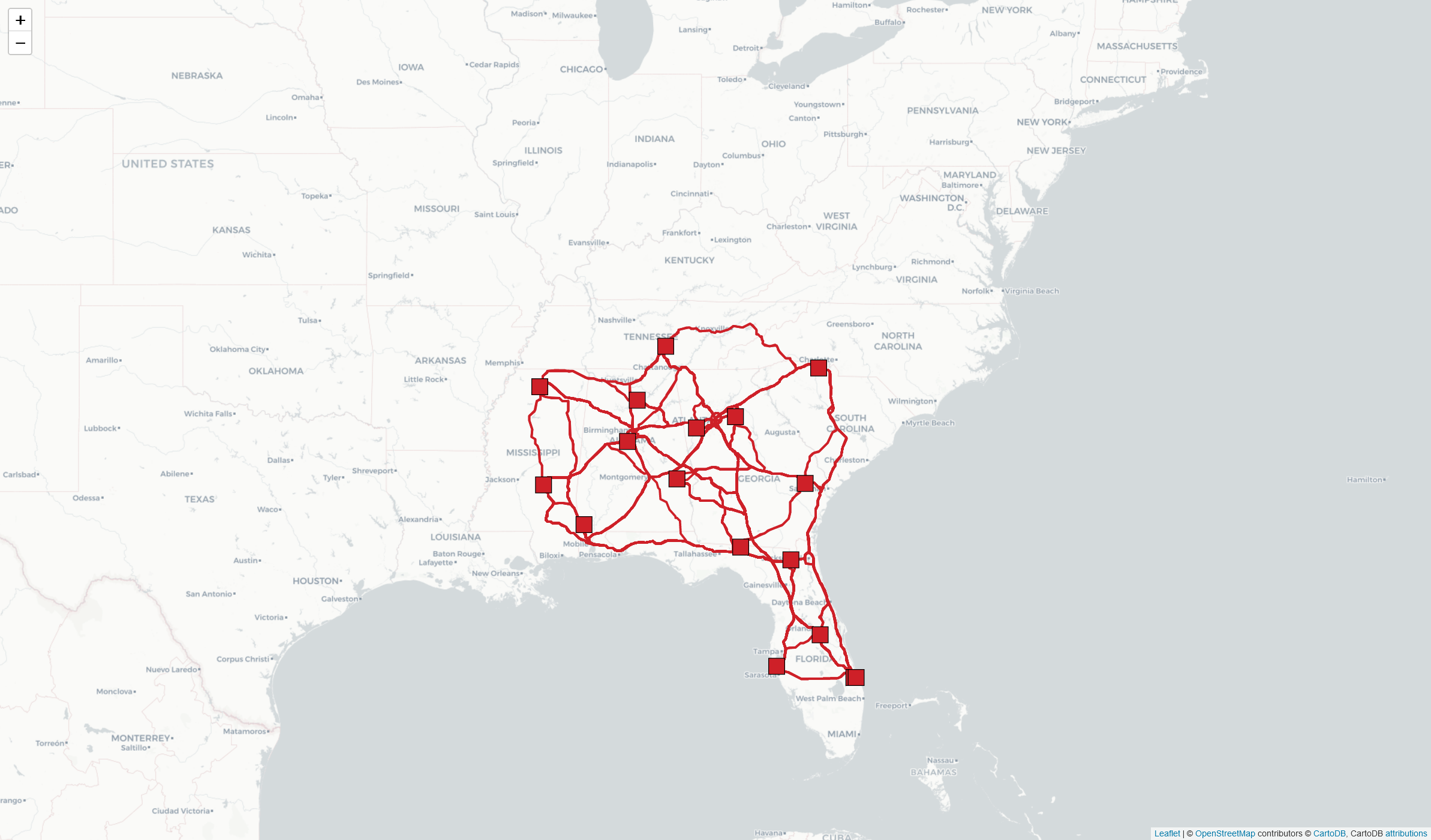}
	\hspace{0.5cm}
	\includegraphics[width=0.20\textwidth, trim=28cm 3cm 29cm 15cm, clip]{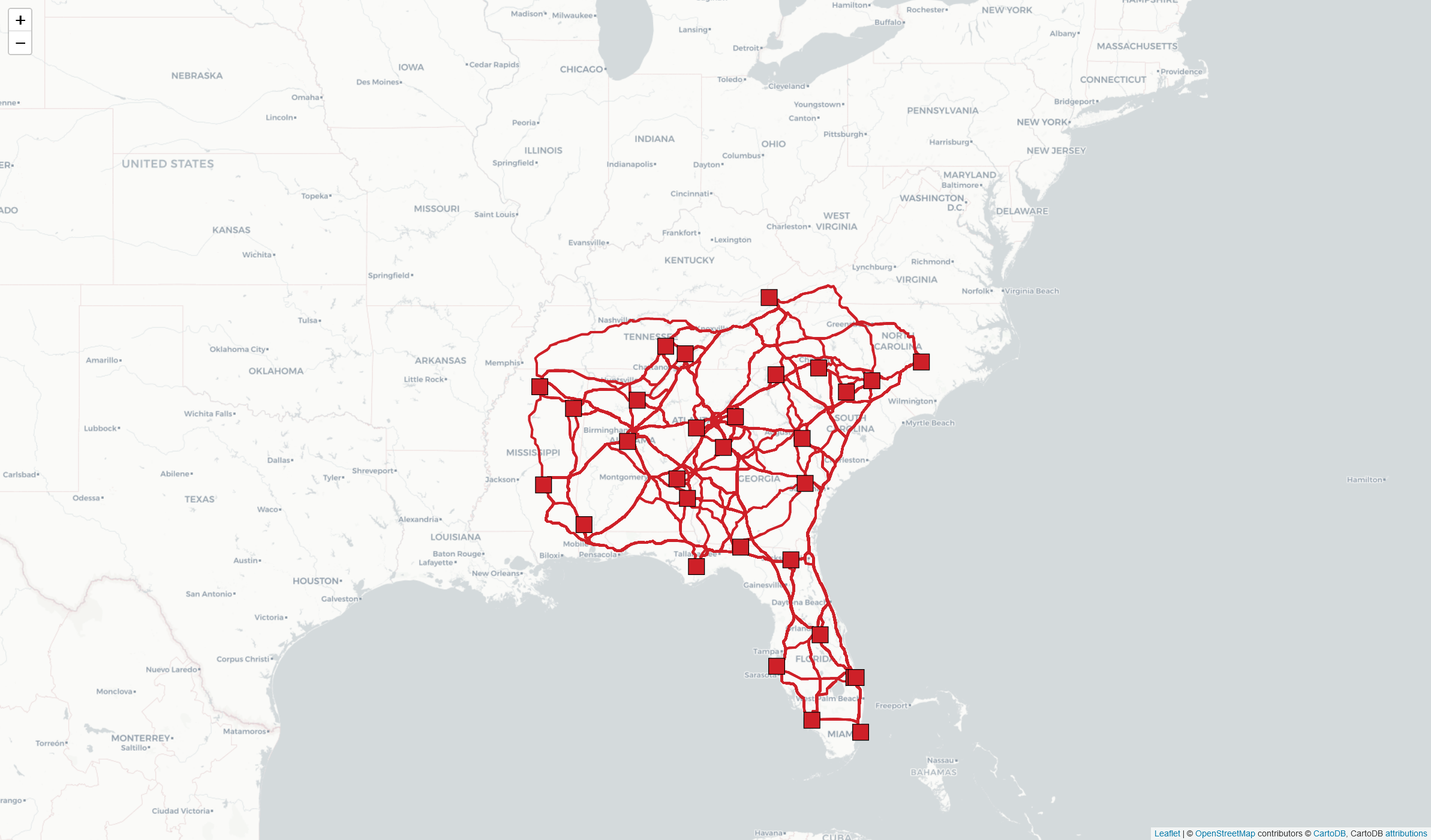}
	\caption{Small and Large Networks for the Southeast.}
	\label{fig:designs}
\end{figure}

\paragraph{Base Cases}
\label{sec:base}
Two base cases are considered in the case study: The \emph{N-17 base case} on the small network and the \emph{N-30 base case} on the large network.
Both cases assume a time flexibility of $\Delta=60$ minutes, loading or unloading of $S=30$ minutes, $\lvert K \rvert = 50$ autonomous trucks are available, and operating autonomous trucks will be 25\% cheaper per mile than conventional trucks.
After filtering, the N-17 and N-30 base cases respectively serve 437 and 468 orders on the ATHN.
This paper focuses on optimizing the autonomous part of the system.
When comparing costs to the current system, it is assumed that the first/last-mile tasks can be served with at most 25\% empty miles.
The associated models are significant in size: The CP model (Figure~\ref{fig:formulation}) has about 100,000 variables and 100,000 constraints.
The NF model~\eqref{eq:nf:obj}-\eqref{eq:nf:maxvehicles}, \eqref{eq:nf:vars} is similar in size but only contains continuous variables.
The set-partitioning formulation (Figure~\ref{fig:cgmodel}) has around 500 constraints and generates about 3,000 variables as part of the CG procedure.

\paragraph{Base Case Results}
\label{sec:baseresults}
The CG and NF methods are used to generate the lower bounds that are presented by Table~\ref{tab:basecaseLB}.
The NF model is indeed very effective in exploiting the problem structure: Compared to CG, the solution time is reduced from hours to seconds while the lower bounds remain exactly the same.
This result is possible because the NF model only ignores constraints that are unlikely to be violated, and therefore hardly affect the bound.
Calculating the NF bound only requires solving an LP, which explains the tremendous speedup.
The bounds are not guaranteed to always be the same: If autonomous trucks are 40\% cheaper than regular trucks and time flexibility is increased to $\Delta=120$ minutes, then CG generates a lower bound that is 0.07\% better than the bound produced by NF.
Hence the NF model presents an excellent trade-off in terms of quality and time.
The Ryder~\shortcite{RyderSAM2021-ImpactAutonomousTrucking} white paper only considers the CP method, which inherently does not produce bounds.
Now that lower bounds are available they will be used to assess solution quality and complement earlier work.

\begin{table}[!t]
    \begin{adjustbox}{width=1\linewidth,center}
        \begin{tabular}{lcccccc}
        \toprule
        & \multicolumn{2}{c}{CP} & \multicolumn{2}{c}{CG} & \multicolumn{2}{c}{NF} \\
        \cmidrule(lr){2-3} \cmidrule(lr){4-5} \cmidrule(lr){6-7}
         & LB (mi)    & Time & \multicolumn{1}{c}{LB (mi)} & \multicolumn{1}{c}{Time} & \multicolumn{1}{c}{LB (mi)} & \multicolumn{1}{c}{Time} \\
        \midrule
        N-17 & n.a.   & n.a.   & 122,061  & 17.7 h  & 122,061 & 8 s  \\
        N-30 & n.a.   & n.a.   & 134,382  & 24.1 h  & 134,382 & 9 s  \\
        \bottomrule
        \end{tabular}
    \end{adjustbox}
    \caption{Base Case Lower-Bound Comparison.}
    \label{tab:basecaseLB}
    \vspace{\baselineskip}
    \begin{adjustbox}{width=1\linewidth,center}
        \begin{tabular}{lcccccc}
            \toprule
            & \multicolumn{2}{c}{CP} & \multicolumn{2}{c}{CG} & \multicolumn{2}{c}{NF} \\
            \cmidrule(lr){2-3} \cmidrule(lr){4-5} \cmidrule(lr){6-7}
             & UB (mi)    & Time & \multicolumn{1}{c}{UB (mi)} & \multicolumn{1}{c}{Time} & \multicolumn{1}{c}{UB (mi)} & \multicolumn{1}{c}{Time} \\
            \midrule
            N-17 & 135,834 (11.3\%) & 1.0 h  & -  & 1.0 h  & 122,658 (0.5\%) & 20 s  \\
            N-30 & 150,573 (12.0\%) & 1.0 h  & -  & 1.0 h  & 135,486 (0.8\%) & 20 s  \\
            \bottomrule
        \end{tabular}
    \end{adjustbox}
    \caption{Upper-Bound Comparison (gaps in parenthesis).}
    \label{tab:basecaseUB}
\end{table}

Table~\ref{tab:basecaseUB} compares the three methods on their ability to produce high-quality solutions.
For consistency with earlier work, each method is given one additional hour after computing the lower bound to generate a feasible solution.
The CG method makes use of the route set $\bar{R}$ that was obtained while calculating the lower bound.
The NF method uses the procedure described in Section~\ref{sec:mcnf} to opportunistically generate solutions from NF$_\delta$ for $\delta\in \{0, 30, 60\}$.
Additionally, the table includes optimality gaps that compare the upper bounds to the lower bounds presented earlier.

\begin{figure}[!t]
	\includegraphics[width=0.9\linewidth,trim={0.5cm 1.2cm 5cm 0},clip]{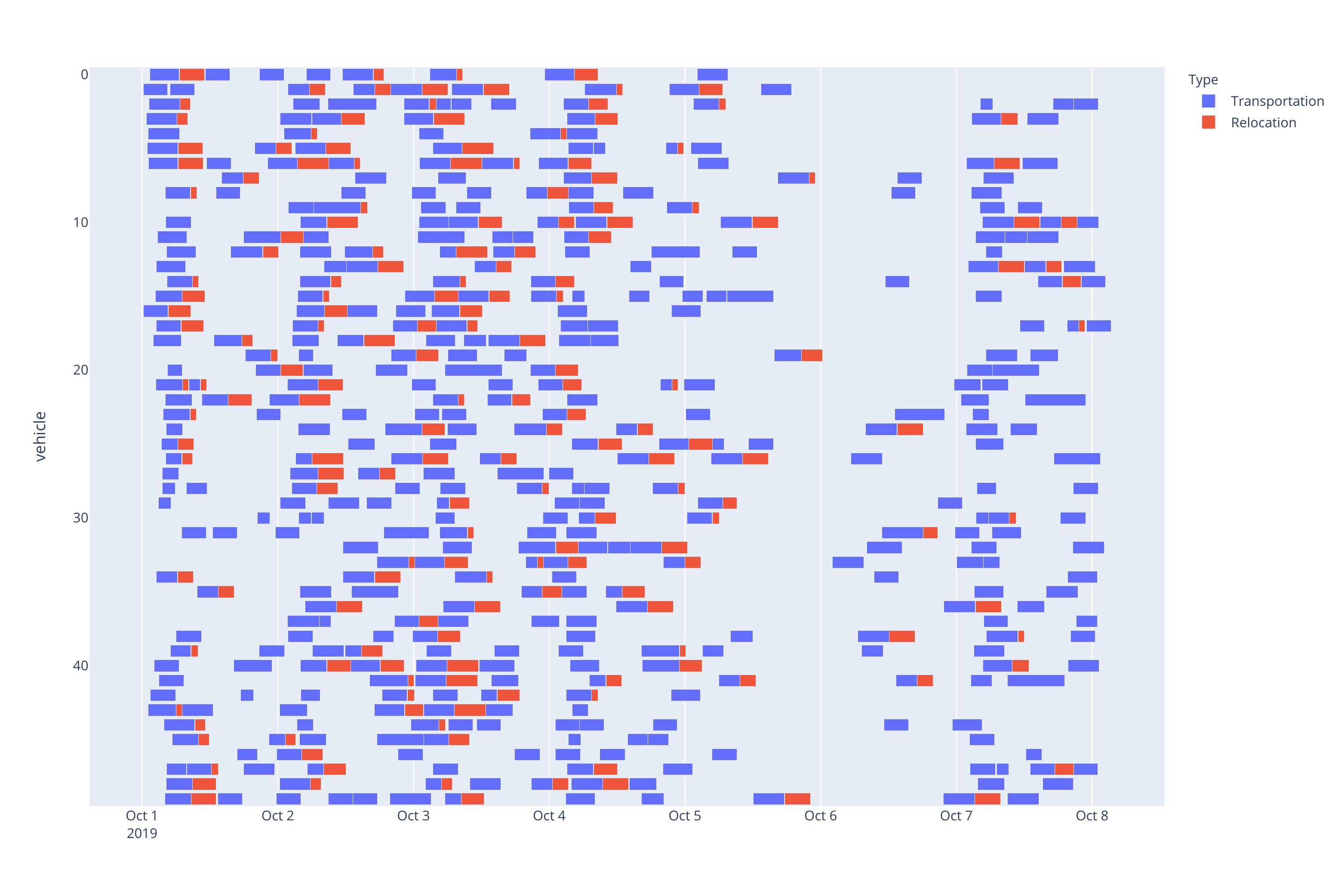}
	\caption{NF Solution for the N-17 Base Case (one vehicle per row, blue for performing tasks and red for relocation).}
	\label{fig:small_ub_gantt}
\end{figure}

The NF method again outperforms the other methods.
The solutions found by the CP method already correspond to significant savings of more than 27\% for ATHN compared to the current system.
However, the optimality gaps of over 11\% show that more savings may be possible.
Surprisingly, the CG method fails to find a feasible solution within one hour.
Intermediate solutions are found with 55 and 56 vehicles for N-17 and N-30 respectively, but these plans are already more expensive than those found with CP.
It seems that the route set $\bar{R}$ is not necessarily a good starting point to find a feasible solution.
The NF method is the clear winner: Not only are the solutions significantly better than those obtained from the CP method, they are provably within only 1\% from optimality.
Figure~\ref{fig:small_ub_gantt} visualizes the NF solution for the N-17 base case: The tasks are spread out over the week, which explains why the time constraints are not very restrictive and relaxing them still leads to good results.

\paragraph{Impact of Time Flexibility}
\label{sec:delta}
\begin{figure}[!t]
	\centering
	\includegraphics[scale=0.42]{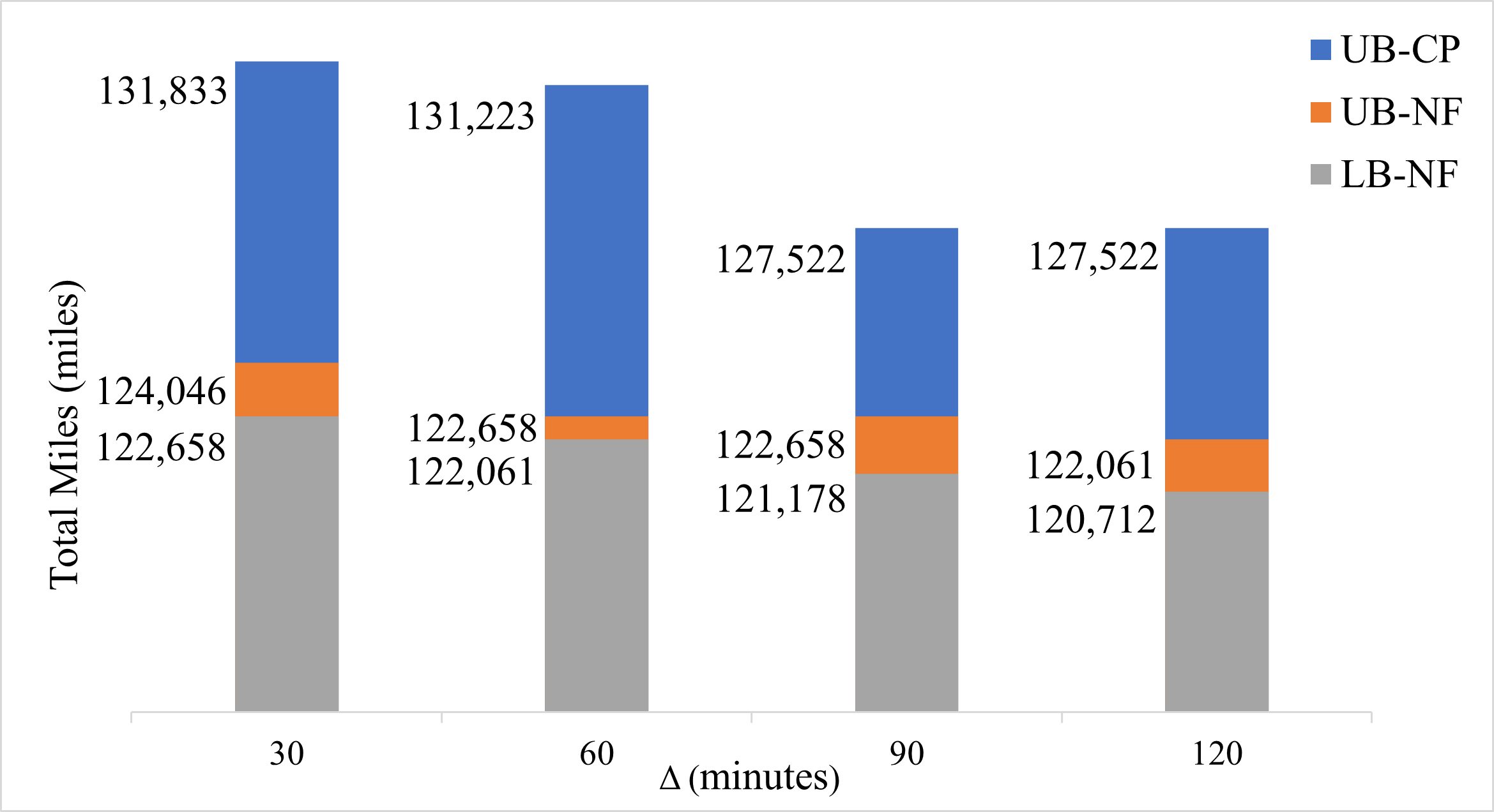}
	\caption{Impact of Time Flexibility.}
	\label{fig:twdelta}
\end{figure}

Deviating from an agreed time window flexibility must be negotiated with the customer, but if there are significant benefits in terms of efficiency, this may be worth the effort.
To determine the impact of appointment flexibility, the two models that produced feasible solutions (CP and NF) are compared on the N-17 base case for different values of $\Delta$ ranging from 30 to 120 minutes.
The time limit is set to four hours per setting to account for the fact that instances with more flexibility are more difficult to solve.
Figure~\ref{fig:twdelta} summarizes the upper bounds (UB-CP, UB-NF) and the lower bounds (LB-NF) that are generated for different flexibility parameters.
If UB-CP deteriorates when $\Delta$ increases, the previous better solution is reported.
The upper bounds for the NF method are calculated with the opportunistic procedure for $\delta \in \{0,30,60,90,120\}$.

Similar to the base case, Figure~\ref{fig:twdelta} shows that the NF method maintains strong performance when time flexibility is increased.
The largest optimality gap of 1.2\% is obtained for $\Delta=90$.
The NF method was unable to improve from $\Delta=60$ to $\Delta=90$, but is able to benefit from additional flexibility provided by $\Delta=120$.
The CP method is able to benefit significantly from the extension of $\Delta=60$ to $\Delta=90$, but does not improve after that.
A possible explanation is that additional flexibility increases the search space, which makes it more difficult for the CP method to find a good solution.

The performance of the NF method is consistent when the experiment is repeated for different values of cost reduction per mile compared to conventional trucks.
Additional instances are generated for 30\%, 35\%, and 40\% cost reduction and for different time flexibilities.
On all instances the optimality gap found by the NF method is less than 1.6\%.
These results further support that the NF method is able to find high-quality solutions.

\paragraph{Impact on ATHN Cost Savings}
The Ryder \shortcite{RyderSAM2021-ImpactAutonomousTrucking} white paper reported cost saving in the range of 27\%-40\% for the challenging orders when ATHN is compared to current operations.
The lower value stems from applying the CP model to the N-17 base case.
The solution found by the NF model improves this number to 32.2\%.
The higher value follows from the scenario that considers the N-30 base case with autonomous trucks that are 40\% cheaper per mile.
Applying the NF model to this instance results in a solution with 44.0\% cost savings compared to current operations.
It follows that better optimization methods improve the range of potential cost savings from 27\%-40\% to 32\%-44\%, which strengthens the business case for ATHNs and autonomous trucking.

\section{Conclusion}
\label{sec:conclusion}
Autonomous trucks are expected to fundamentally transform the freight transportation industry.
Recent studies have shown that Autonomous Transfer Hub Networks (ATHN), which combine autonomous trucks on middle miles with human-driven on the first and last miles, can produce significant savings.
This paper presented three different methods to optimize ATHN operations: a Constraint-Programming (CP) method, a Column-Generation (CG) method, and a Network Flow (NF) method.
The methods were compared on a realistic case study with data provided by Ryder System, Inc.
The paper complemented earlier work by calculating lower bounds on performance.
This showed that the ATHN CP solution, which already corresponds to large savings compared to the current system, could potentially be improved.
It was demonstrated on the case study that the NF model effectively exploits the problem structure and outperforms the other methods: It produces both lower bounds that are comparable to the CG method and upper bounds that improve upon the CP method in a matter of seconds.
Further analysis showed that NF is able to benefit from additional pickup flexible, and consistently outperforms the other methods.
The NF method improved the range of potential savings of ATHN from 27\%-40\% to 32\%-44\%, further strengthening the business case for autonomous trucking.

\section*{Acknowledgments}
This research was partly funded through a gift from Ryder and partly supported by the NSF AI Institute for Advances in Optimization (Award 2112533). Special thanks to the Ryder team for their invaluable support, expertise, and insights.

\bibliographystyle{named}
\bibliography{references}

\end{document}